\documentclass{amsart}
\usepackage{hyperref}
\usepackage{verbatim}
\usepackage{nccmath}
\usepackage{graphicx}
\usepackage{enumitem}
\usepackage{mathtools}
\mathtoolsset{showonlyrefs}
\allowdisplaybreaks
\def \R{\mathbb R}
\def \B{\mathbb B}
\def \N{\mathbb N}

\def \C{\mathbb C}
\def \im{\operatorname{Im}}
\def \id{\operatorname{id}}
\newcommand \vol[2][3]{\left|#2\right|_{#1}}
\newtheorem{theorem}{Theorem}
\newtheorem{lemma}{Lemma}
\newtheorem{proposition}{Proposition}

\title[The Barker--Larman problem in dimension $4$]{An approximate counterexample to the Barker--Larman problem in dimension $4$}
\author{J. Haddad}

\begin{document}
\begin{abstract}
	The Barker--Larman problem asks if a convex body $K \subseteq \R^n$ containing the Euclidean ball $\B_n$, such that all the sections of $K$ by hyperplanes tangent to $\B_n$ have constant $(n-1)$-dimensional volume, must necessarily be a Euclidean ball.

	In this paper we show a result pointing to a negative answer in dimension $4$.
	Taking $\lambda_0 = 4 \sqrt{3}\pi$ and any $N \in \N$, we obtain the existence of a family of convex bodies $K_{\lambda,N}$ with $\lambda \in (\lambda_0-r_N, \lambda_0 + r_N)$, such that the sections of $K_{\lambda,N}$ by hyperplanes tangent to the Euclidean ball, have area within $c |\lambda - \lambda_0|^{N+1}$ of $\lambda$, while the difference between outradius and inradius of $K_{\lambda,N}$ is larger than $C |\lambda - \lambda_0|$.

	The bodies $K_{\lambda,N}$ are constructed via radial functions as
	\[\rho_{K_{\lambda,N}}(t) = \cos\left( \sum_{n=0}^N \frac{(\lambda-\lambda_0)^n}{n!} \varphi_n(t) \right)^{-1},\]
	where $t \in [0,2\pi), \lambda \in \R$ and $\varphi_n$ are trigonometric polynomials that can be computed explicitly.
	The convergence of the inner power series when $N \to \infty$ (which is left open) would imply a negative answer to the Barker--Larman problem in dimension $4$.

	As an example we obtain a convex body whose outradius and inradius differ by more than $0.176$, and the area of the sections oscillate by less than $3 \times 10^{-7}$.
\end{abstract}
\maketitle
\section{Introduction}
\label{sec_intro}

In 2001 Barker and Larman formulated the following question which is now an important open problem in geometric tomography:
Let $K \subseteq \R^n$ be a convex body (a compact and convex set) containing the centered unit Euclidean ball $\B_n$ in its interior. Assume that for every hyperplane $H$ tangent to $\B_n$,
\begin{equation}
	\label{eq_equalsections}
	\vol[n-1]{K \cap H} = \lambda,
\end{equation}
where $\lambda > 0$ is fixed and independent of $H$, and $\vol[k]{ \ \cdot \ }$ denotes $k$-dimensional volume.
Is it true that $K$ has to be itself a Euclidean ball?

This innocent--looking problem is still open in general, although some particular cases and variations of this problem were solved recently.
We will refer to the sections $K \cap H$ where $H$ is tangent to $\B_n$ as the $1$-sections, since the hyperplanes $H$ are at distance $1$ from the origin.

The case $n=2$ had been settled in the affirmative by Santaló in \cite{santalo1951two}, back in 1951.
Clearly, if $K$ is a Euclidean ball, then its radius is determined by $\lambda$.
Yaskin and Yaskina in \cite{yaskin2015thick} proved $K$ is a Euclidean ball in dimension $3$, under the additional assumption that the volume of $K$ equals the volume of the ball whose $1$-sections have area $\lambda$.

Similar results of determination of convex bodies by $1$-sections were established in \cite{yaskin2011unique, alfonseca2025characterizations, alfonseca2025croft, gardner2012problem, kurusa2015characterizations, nazarov2014asymmetric, yaskin2017non}.

More recently the author together with D. Ryabogin showed in \cite{haddad2026convex} that the answer in dimension $n=4$ is positive for symmetric convex bodies of revolution, and $\lambda \in \Lambda \subseteq \R_+$, where $\Lambda$ is a certain set determined by arithmetic properties.
Roughly speaking, consider $\alpha = \arctan((\lambda/\omega_3)^{1/3})$ where $\omega_k$ denotes the $k$-dimensional volume of the Euclidean ball $\B_k$.
Then the condition $\lambda \in \Lambda$ holds when $\frac{\alpha}{\pi}$ cannot be approximated by rational numbers very rapidly (see \cite[Definition 8]{haddad2026convex}).
The conditions imposed in \cite{haddad2026convex} may seem very restrictive for a positive answer, but this is actually very close to the best possible result in the positive direction.
The arguments in \cite{haddad2026convex} break down completely if $\frac{\alpha}{\pi}$ is a rational number, and the set $\Lambda$ is nowhere dense. This means that there is still room for counterexamples for many values of $\lambda \in \R \setminus \Lambda$. In this paper we focus in the case $\alpha_0=\frac \pi 3$, or equivalently $\lambda_0 = 4 \sqrt{3} \pi$, and try to find a one-parameter family of counterexamples for $\lambda$ near $\lambda_0$.

For any symmetric convex body of revolution $K$ and $x \in [0,\pi]$, call $A_K(x)$ the $3$-dimensional volume of any section of $K$ by a hyperplane tangent to $\B_4$, whose normal forms an angle $x$ with the axis of revolution.

In order to measure how much a convex body $K$ differs from a Euclidean ball, we shall use $R(K)$ and $r(K)$, which denote its outradius and inradius, respectively.
In this paper we prove the following:

\begin{theorem}
	\label{thm_approximate}
	Let $\lambda_0 = 4 \sqrt{3}\pi$.
	For any $N \in \N$ there is $r_N > 0$ and a $1$-parameter family of symmetric convex bodies of revolution $K_{\lambda, N}$ for $\lambda \in (\lambda_0-r_N, \lambda_0 + r_N)$, such that for every $x \in (0,\pi)$ and $\lambda \in (\lambda_0-r_N, \lambda_0 + r_N)$,
	\[| \lambda - A_{K_{\lambda,N}}(x)| \leq c \left|\frac{\lambda-\lambda_0}{r_N} \right|^{N+1},\]
	while
	\[ |R(K_{\lambda,N}) - r(K_{\lambda,N})| \geq c |\lambda - \lambda_0|,\]
	where $c>0$ is a universal constant.

	For every $N$, the set $K_{\lambda_0, N}$ is the Euclidean ball of radius $2$.
\end{theorem}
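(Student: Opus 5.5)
We will argue by a formal perturbation (Lindstedt--Poincaré type) expansion in $\lambda-\lambda_0$ around the ball of radius $2$, in the spirit of the abstract. The first step is to write $A_K(x)$ explicitly for a symmetric body of revolution in $\R^4$. We describe the profile of $K$ in a $2$-plane containing the axis by its radial function $\rho(t)$, $t\in[0,2\pi)$, and parametrize it as $\rho(t)=1/\cos(\varphi(t))$. The point of this parametrization is that the boundary point at angle $t$ lies on the line tangent to the unit circle at angle $t\pm\varphi(t)$. A tangent hyperplane with normal at angle $x$ meets the plane in a tangent line. The section $K\cap H$ is a $3$-dimensional body of revolution, so its volume is an integral $\pi\int y^2\,ds$ along the chord of the profile. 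Using the substitution given by $\varphi$, we write this as an integral over $t$ between the two endpoints $t_\pm(x)$ solving $t\pm\varphi(t)=x$. This expresses $A_K(x)$ as a nonlinear functional $\mathcal F(\varphi)(x)$ that is smooth, indeed real-analytic, in $\varphi$ near the constant $\varphi\equiv\alpha_0=\pi/3$. For that constant we have $\rho\equiv 2$, and the $1$-sections are $3$-balls of radius $\sqrt3$ with volume $\tfrac43\pi\cdot3\sqrt3=4\sqrt3\pi=\lambda_0$, which gives the last claim of the theorem.

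Next we linearize. We set $\varphi=\alpha_0+\psi$ and compute $D\mathcal F(\alpha_0)\psi$. We expect it to be a combination of the values $\psi(x\pm\alpha_0)$ together with an integral term, so that on Fourier modes $\cos(2kt)$ (even modes, by symmetry) it acts diagonally with eigenvalues $\mu_k$. These are explicit trigonometric expressions in $k\alpha_0$, of the type behind the arithmetic condition of \cite{haddad2026convex}. Because $\alpha_0/\pi=1/3$ is rational, some eigenvalue $\mu_{k_0}$ vanishes; this is the resonance. We choose $\varphi_1=a\cos(2k_0t)+b$, where the constant $b$ is fixed by the requirement that the $\lambda$-derivative of $\mathcal F$ equal $1$ (the radial mode has $\mu_0\ne0$), and $a\neq0$ is a free amplitude. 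The kernel direction $a\cos(2k_0 t)$ is what makes the body non-spherical at first order. It gives $R-r\ge c|\lambda-\lambda_0|$ once $a$ is fixed.

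We then determine the higher terms inductively. Expanding $\mathcal F(\alpha_0+\sum_{n\ge1}\frac{(\lambda-\lambda_0)^n}{n!}\varphi_n)=\lambda$ in powers of $\lambda-\lambda_0$, the order-$n$ equation reads $D\mathcal F(\alpha_0)\varphi_n=P_n(\varphi_1,\dots,\varphi_{n-1})$, where $P_n$ is a trigonometric polynomial. This is solvable in trigonometric polynomials precisely when $P_n$ has no component on the resonant modes, i.e.\ on those $k$ with $\mu_k=0$ among the finitely many frequencies present. For all nonresonant frequencies we simply divide by $\mu_k\ne0$. The \textbf{main obstacle} is the solvability condition at each order. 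To handle it, we keep free parameters in the resonant (kernel) components of the lower-order terms, for instance the kernel coefficient of $\varphi_{n-1}$, and use them to cancel the resonant component of $P_n$. This requires a nondegeneracy condition: the derivative of the resonant projection of $P_n$ with respect to that parameter must be nonzero. We expect it to be proportional to $a$ times a second-order coefficient, which we would verify by an explicit computation of the quadratic term of $\mathcal F$. We also have to rule out additional resonant modes $k$ with $\mu_k=0$ among $k\equiv 0$ modulo $3$ as generated. Since each $\varphi_n$ has bounded degree, only finitely many eigenvalues need checking at each stage.

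Finally, for fixed $N$, the truncated function $\varphi^{(N)}$ is a trigonometric polynomial. By Taylor's theorem applied to the analytic functional $\mathcal F$, we get $|\mathcal F(\varphi^{(N)})(x)-\lambda|\le C_N|\lambda-\lambda_0|^{N+1}$ uniformly in $x$. Defining $r_N$ by $C_N r_N^{N+1}\le c$ gives the stated form. We shrink $r_N$ further so that $\rho=1/\cos\varphi^{(N)}$ remains a $C^2$-small perturbation of the constant $2$. This guarantees that the profile has positive curvature, hence $K_{\lambda,N}$ is convex, contains $\B_4$, and is symmetric.
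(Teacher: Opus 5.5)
There is a genuine gap, and it is exactly the step you label the main obstacle. Your description of the linearization also points that step in the wrong direction.

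\textbf{The resonance is not a single mode.} The linearized operator is not diagonal on the modes $\cos(2kt)$. In the paper's normalization $W(h)=\partial_x(\sin x\,A_{K_h})$, the operator $W'(h_{\lambda_0})$ sends $\cos(2kt)$ to a combination of $\cos((2k-1)x)$ and $\cos((2k+1)x)$. From its explicit formula one checks instead that $W'(h_{\lambda_0})[\sin((2k+1)t)/\sin t]=-32\sqrt3\,\pi\,\sin\bigl((2k+1)\tfrac\pi3\bigr)\cos((2k+1)x)$, and this vanishes for \emph{every} $k\equiv 1 \pmod 3$. So the kernel is infinite-dimensional, spanned by $K_n(t)=\cos((6n-2)t)+\cos(6nt)+\cos((6n+2)t)$, $n\ge 0$. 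A complement of the image is spanned by $\cos((6n+3)x)$, $n\ge0$. These extra resonances cannot be ``ruled out'', and the nonlinearity does excite them: the paper's $\varphi_3$ already carries a non-zero multiple of $K_1$, which is where its $\cos 6t$ term comes from.

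\textbf{The amplitude is not free.} Write $\varphi_1=-\frac1{48\pi}+\mu_0K_0$. The $\cos 3x$ component of $W''(h_{\lambda_0})[\varphi_1,\varphi_1]$ is $4\sqrt3(1-24\pi\mu_0)\mu_0$, so the second-order equation is solvable only for $\mu_0\in\{0,\frac1{24\pi}\}$. With any other amplitude your construction already stops at order $2$. This condition is quadratic in $\mu_0$, not of the ``non-zero derivative'' type you describe; the non-trivial branch is its non-zero root.

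\textbf{The missing non-degeneracy.} At order $N+1$ you must cancel all obstruction components $\cos 3x,\cos 9x,\dots,\cos((6n+3)x)$ of the right-hand side simultaneously. To do so you use the \emph{whole} kernel part $k_N=\sum_{j\le n}c_jK_j$ of $\varphi_N$, together with a particular solution $w_{N+1}$. The required condition is therefore that the map sending $k\in\mathrm{span}(K_0,\dots,K_n)$ to the $\cos((6m+3)x)$-components of $W''(h_{\lambda_0})[\varphi_1,k]$ is invertible. That is a matrix condition, not a scalar one.

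This is the core of the paper's proof. It computes $W''(h_{\lambda_0})[\varphi_1,K_N]$ explicitly for all $N$. Its only obstruction components lie on $\cos((6N-3)x)$ and $\cos((6N+3)x)$, the latter with coefficient $-(6N+3)/\sqrt3\neq 0$. Hence the matrices $M_{3n}$ can be triangularized with non-zero pivots, so they are invertible.

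Your heuristic ``amplitude times a second-order coefficient'' is the standard transcritical computation, and it only covers the $K_0$ direction. For $K_j$ with $j\ge 1$ a new computation is needed. Without it, the existence of $\varphi_n$ for $n\ge 3$ is not established.

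The rest of your plan is sound and matches the paper in substance:
\begin{itemize}
\item the value $\lambda_0$ for the ball of radius $2$;
\item the Taylor remainder with $C_N$ absorbed into $r_N$ (instead of the paper's maximum-modulus argument);
\item convexity for small $r_N$;
\item $R-r\ge c|\lambda-\lambda_0|$ from a non-zero kernel amplitude.
\end{itemize}
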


The convex bodies in Theorem \ref{thm_approximate} are roughly constructed in the following way:
Let $h:\R \to \R$ be a smooth increasing function satisfying $h(t + 2 \pi) = h(t) + 2\pi$.
The function
\begin{equation}
	\label{eq_intro_curve}
	\frac 1 {\cos(t - h(t))} (\cos(t), \sin(t)),\quad t \in [0, 2\pi),
\end{equation}
parametrizes a smooth curve in the plane, which is, under additional conditions on $h$, the boundary of a convex body $L_h \subseteq \R^2$ symmetric with respect to the horizontal axis.
Thus, it can be used to define a convex body of revolution
\[K_h = \{(t, y) \in \R \times \R^3 : (t, \|y\|_2) \in L_h\} \subseteq \R^4.\]
The curve \eqref{eq_intro_curve} is a very convenient way of parametrizing convex bodies of revolution with a good control on its $1$-sections.

The problem of finding a function $h$ for which $K_h$ has $1$-sections of constant volume has the characteristics of a bifurcation problem.
For each $\lambda>0$ there is a trivial solution
\[
	h_\lambda^{\text{(trivial)}}(t) = t - \arctan((\lambda/\omega_3)^{1/3}),
\]
for which $K_{h_\lambda^{\text{(trivial)}}}$ is the centered Euclidean ball whose $1$-sections have volume $\lambda$ (it is the ball of radius $\sqrt{1+(\lambda/\omega_3)^{2/3}}$). This set of solutions $h_\lambda^{\text{(trivial)}}$ forms the ``trivial branch'' of solutions.
We fix $\lambda_0 = 4 \sqrt{3}\pi$ and search a non-trivial branch of functions $h_\lambda$ for $\lambda$ close to $\lambda_0$, which crosses the trivial branch at $\lambda = \lambda_0$.

The classical bifurcation theorem of Crandall-Rabinowitz cannot be applied to this situation, as we shall explain in Section \ref{sec_operatorW}. Instead, we rely on a more pedestrian method.
We shall construct a formal power series representing the function $h_\lambda$, in the form
\begin{equation}
	\label{eq_formalpowerseries}
	h_{\lambda}(t) = t - \frac \pi 3 + \sum_{n=1}^\infty \frac{(\lambda-\lambda_0)^n}{n!} \varphi_n(t),
\end{equation}
where $\varphi_n$ are trigonometric polynomials.
Consider the partial sums
\[h_{\lambda,N}(t) = t - \frac \pi 3 + \sum_{n=1}^N \frac{(\lambda-\lambda_0)^n}{n!} \varphi_n(t).\]
The functions $\varphi_n$ are constructed in such a way that for every $N \in \N$, and any hyperplane $H$ tangent to the ball, 
the $3$-dimensional volume $f(\lambda) = \vol{H \cap K_{h_\lambda}}$ satisfies
\[
	f(\lambda_0)=\lambda_0, \quad f'(\lambda_0) = 1, \quad f''(\lambda_0)=0, \quad \ldots  \quad f^{(N)}(\lambda_0) = 0.
\]
In other words, for $\lambda$ close to $\lambda_0$, the volume of the section of $K_{h_\lambda}$ is $\lambda$ up to order $(\lambda-\lambda_0)^{N+1}$.
These equations determine uniquely the functions $\varphi_n$ for all $n$, except for $n=1$ for which we have $2$ choices; one choice coincides with the direction of the trivial branch of solutions, and the other choice will generate the non-trivial one.

Unfortunately we were not able to prove the convergence of the power series \eqref{eq_formalpowerseries}, so we have to settle for the approximate solutions $h_{\lambda, N}$.

The concrete example $K_{\lambda_0+1, 8}$ is already a very good approximation of a counterexample.
		The difference between outradius and inradius can be computed exactly, finding symbolically the maximum and minimum of $h_{\lambda_0+1,8}$ and using \eqref{eq_intro_curve}. The computations were done in Mathematica 14.2 and give
		\[R(K_{\lambda_0+1,8}) - r(K_{\lambda_0+1,8}) > 0.176.\]
		Numerical simulations show that $A_{\lambda_0+1,8}$ oscillates inside an interval of length $3 \times 10^{-7}$ near $22.76559237$.

\section{Sections of a convex body of revolution}

\subsection{Parametrization}
\label{sec_param}

Let $\psi:\R \to (0,\pi/2)$ be an even, $C^1$ and $2\pi$-periodic function with $\psi'(t)<1$ for every $t \in \R$.
Consider the curve $C$ parametrized in polar coordinates by the function
\[\rho(t) = \cos(\psi(t))^{-1},\]
where $\rho(t)$ represents the radial function at angle $t \in [0,2\pi)$.
The points of $C$ are parametrized as 
\[\gamma(t) = \rho(t) (\cos(t), \sin(t)),\]
and notice that $\|\gamma(t)\| > 1$ for every $t \in [0,2\pi)$, meaning that $\B_2$ is contained in the interior of $C$.
Since $\psi$ is even, it is clear that $C$ is a symmetric curve with respect to the horizontal axis.

\begin{figure}
	\caption{}
	\label{fig_parametrization}
\includegraphics[scale=.5]{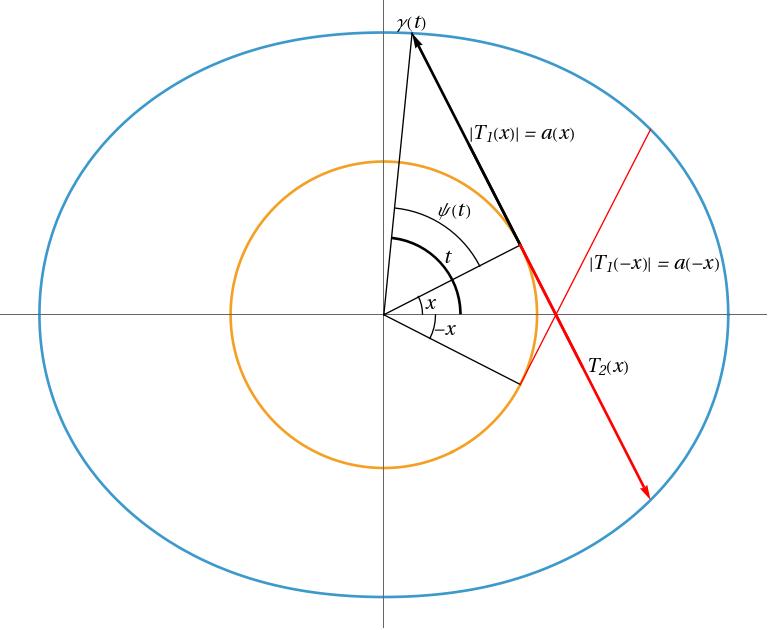}
\end{figure}

Let
\begin{equation}
	\label{eq_def_h_psi}
	h(t) = t - \psi(t),
\end{equation}
and put $x = h(t)$ as in Figure \ref{fig_parametrization}.
Since $\rho(t) \cos(\psi(t)) = 1$ and $\psi(t)$ is the angle between $\gamma(t)$ and $(\cos(x), \sin(x))$, the vector $T_1$ from $(\cos(x), \sin(x))$ to $\gamma(t)$ is tangent to $S^1$ (see Figure \ref{fig_parametrization}).
It points in the direction $(\cos(x+\pi/2), \sin(x+\pi/2))$, and its length can be computed as
\[\tan(\psi(t)) = \tan(t - h(t)).\]

By the condition imposed on $\psi'$, the function $h$ is strictly increasing and thus, invertible. It can be regarded as a diffeomorphism of the circle, since
\[h(t+2\pi) = h(t) + 2\pi,\]
for every $t \in \R$.
The invertibility of $h$ allows us to parametrize the curve with respect to the variable $x$.
Moreover, the invertibility is equivalent to the fact that every line tangent to the unit circle $S^1 = \partial \B_2$, touches $C$ in exactly two points (again, see Figure \ref{fig_parametrization}).
We regard $T_1 = T_1(x)$ as a function of $x$, and so its length,
\[a(x) = \tan(h^{-1}(x)-x).\]

Now we intend to compute the length of the other tangent vector to $S^1$, say $T_2(x)$, from $(\cos(x), \sin(x))$ to a point in $C$, and having the direction $(\cos(x-\pi/2), \sin(x-\pi/2))$.
Since we already know that $C$ is symmetric with respect to the horizontal axis, and $(\cos(-x), \sin(-x)) = (\cos(x), -\sin(x))$, 
the vector from $(\cos(-x), \sin(-x))$ to $\gamma(h^{-1}(-x))$, which is tangent to $S^1$, must reflect onto a vector from $(\cos(x), \sin(x))$ to a point in $C$, which is also tangent to $S^1$.
We deduce that $a(-x)$ is the length of $T_2(x)$.

In sum, we have proven the following:
\begin{lemma}
	Let $\psi: \R \to (0,\pi/2)$ be an even, $C^1$ and $2\pi$-periodic function such that $\psi'(x) < 1$ for every $x \in \R$.
	Consider $h(t) = t - \psi(t)$ which is a diffeomorphism of the real line, satisfying
	\[h(t+2\pi) = 2\pi + h(t),\]
	for every $t \in \R$ (and thus it represents a diffeomorphism of $S^1$).
	
	\begin{enumerate}[label=(\alph*), ref = \alph* ]
		\item
	The functions
	\begin{align}
        \label{eq_parametrization_h_plus} \delta_{K,+}(x) &= (\cos(x), \sin(x)) +\tan\left(h^{-1}(x) - x\right) (\cos(x+\pi/2), \sin(x+\pi/2)),\\
	\label{eq_parametrization_h_minus} \delta_{K,-}(x) &= (\cos(x), \sin(x)) +\tan\left(h^{-1}(-x) + x\right) (\cos(x-\pi/2), \sin(x-\pi/2)),\\
	\label{eq_parametrization_gamma} \gamma_K(t) &= \cos(t-h(t))^{-1} (\cos(t), \sin(t)),
	\end{align}
	where $x,t \in [0,2\pi)$, parametrize the same set $C \subseteq \R^2$, which is a $C^1$ closed curve, symmetric with respect to the horizontal axis.
	\item
		For every $x \in \R$, the (only) two segments $T_1, T_2$ tangent to $S^1$, joining $(\cos(x), \sin(x))$ with $p_1, p_2 \in C$ have lengths $a(x) = \tan(h^{-1}(x) - x)$ and $a(-x) = \tan(h^{-1}(-x) + x)$, respectively.

		\item
	If $\psi$ is also $\pi$-periodic, then $C$ is symmetric with respect to the origin.

		\item
			For every $x \in \R$,
	\begin{equation}
		\label{eq_relation_a_phi_h}
		h^{-1}(x) - x = \psi(h^{-1}(x)).
	\end{equation}

		\item
	If $h$ is a $C^\infty$ diffeomorphism of the real line, then $C$ is also a $C^\infty$ curve.

		\item
			If $\psi(t) = \alpha + \varphi(t)$ where $\alpha>0$ and $\varphi$ is sufficiently small in the $C^2$ topology, then $C$ is the boundary of a convex body which is close, in the Hausdorff metric, to the Euclidean ball of radius $\cos(\alpha)^{-1}$.
\end{enumerate}
\end{lemma}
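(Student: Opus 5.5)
The plan is to make the geometric discussion preceding the statement rigorous, taking the items roughly in the order (d), (a)/(b), (c), (e), (f). First, since $\psi'<1$ we have $h'(t)=1-\psi'(t)>0$, so $h$ is strictly increasing. Because $\psi$ is $2\pi$-periodic, $h(t+2\pi)=h(t)+2\pi$, and $h(t)\to\pm\infty$ as $t\to\pm\infty$. By the inverse function theorem $h$ is therefore a $C^1$ diffeomorphism of $\R$, and $h^{-1}$ also commutes with translation by $2\pi$. Item (d) is then immediate: substitute $t=h^{-1}(x)$ into $h(t)=t-\psi(t)$ to get $x=h^{-1}(x)-\psi(h^{-1}(x))$.

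For (a) and (b), set $t=h^{-1}(x)$ and write $u_\theta=(\cos\theta,\sin\theta)$. Then
\[u_x+\tan(\psi(t))\,u_{x+\pi/2}=\cos(\psi(t))^{-1}u_{x+\psi(t)}=\cos(\psi(t))^{-1}u_t=\gamma_K(t),\]
using the identity $\cos\psi\, u_x+\sin\psi\, u_{x+\pi/2}=u_{x+\psi}$ and $x+\psi(t)=t$. Thus $\delta_{K,+}=\gamma_K\circ h^{-1}$. Since $\psi$ is even, $h$ is odd, and hence so is $h^{-1}$. Reflecting across the horizontal axis sends $u_\theta$ to $u_{-\theta}$, and applying this to $\delta_{K,+}(-x)$ gives exactly $\delta_{K,-}(x)$. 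Since $C$ is invariant under this reflection (because $\rho$ is even), all three maps parametrize $C$. The curve is $C^1$ and closed because $\rho$ is $C^1$, positive and periodic.

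For the uniqueness in (b), note that the tangent line to $S^1$ at $u_x$ is $\{u_x+s\,u_{x+\pi/2}: s\in\R\}$. A point $\gamma(t)$ lies on it if and only if $\langle\gamma(t),u_x\rangle=1$, that is, $\rho(t)\cos(t-x)=1$, or equivalently $\cos(t-x)=\cos\psi(t)$. Restricting to $t-x\in(-\pi,\pi]$, this means $t-x=\pm\psi(t)$. The solutions are $t-\psi(t)=x$ and $t+\psi(t)=x$. The first has the unique solution $t=h^{-1}(x)$. The second is equivalent, via $t\mapsto -t$ and evenness, to $h(-t)=-x$, so it has the unique solution $t=-h^{-1}(-x)$. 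The two segments have lengths $|s|=\tan\psi(t)$ in each case, which gives $a(x)$ and $a(-x)$ by (d).

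Item (c) holds because $\pi$-periodicity gives $\rho(t+\pi)=\rho(t)$. Item (e) follows since $h^{-1}$ is $C^\infty$ whenever $h$ is, and $\psi=t-h(t)$ is then also smooth.

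Item (f) is the only step requiring real estimates, and I expect it to be the main obstacle, mostly for bookkeeping. The plan is to show that the signed curvature of $\gamma$ stays positive. For a polar curve $\rho(t)$ the curvature has numerator $\rho^2+2\rho'^2-\rho\rho''$. When $\varphi=0$ this equals $\rho^2=\cos(\alpha)^{-2}>0$, and $\rho,\rho',\rho''$ depend continuously on $\varphi$ in $C^2$. So for $\|\varphi\|_{C^2}$ small the curvature is positive, and a closed curve with positive curvature and winding number one is the boundary of a convex body. Moreover $\sup_t|\rho(t)-\cos(\alpha)^{-1}|\to0$, which gives closeness to the ball of radius $\cos(\alpha)^{-1}$ in the Hausdorff metric.
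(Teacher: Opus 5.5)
Your proof is correct and follows the paper's route: the paper's proof is the geometric discussion before the lemma (tangency from $\rho(t)\cos\psi(t)=1$, invertibility of $h$ from $\psi'<1$, and the second segment obtained by reflecting across the horizontal axis). Your identity $u_x+\tan\psi\,u_{x+\pi/2}=\cos(\psi)^{-1}u_{x+\psi}$ makes this precise, and you also prove the uniqueness in (b) and items (c), (e), (f), which the paper only asserts.

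One correction. The remark that $h$, and hence $h^{-1}$, is odd is false. Indeed $h(0)=-\psi(0)\neq 0$, and in general $h(-t)=-h(t)-2\psi(t)$; if $h^{-1}$ were odd you would get $a(-x)=-a(x)<0$. Nothing depends on it, because the relation between $\delta_{K,-}(x)$ and the reflection of $\delta_{K,+}(-x)$, and your reduction of $t+\psi(t)=x$ to $h(-t)=-x$, use only the evenness of $\psi$. Simply delete that sentence.
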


\subsection{Area equation}
\label{sec_equation}

Let $K \subseteq \R^4$ be a symmetric convex body of revolution, whose axis of revolution is the vector $e_1 = (1,0,0,0)$.
The set $K$ can be described with a one dimensional concave and even function $f:[-d,d] \to \R_+$ as 
\[K = \{(t,y) \in [-d,d] \times \R^3 : \|y\|_2 \leq f(t) \}.\]
The intersection of $K$ with the plane $\R^2 \times \{(0,0)\}$ can be identified with a subset of $\R^2$, giving us the following planar convex body:
\[M = \{(t,y) \in [-d,d] \times \R : |y| \leq f(t) \}.\]
For any line $L \subseteq \R^2$ tangent to $\B_2$, we consider the hyperplane
\[
	H_L = L + \{(0,0)\} \times \R^2.
\]

Let $s \in \R$ be the slope of the line $L$. It was proven in \cite[Lemma 4.2]{alfonseca2025croft} that the $3$-dimensional volume of the $1$-section of $K$ by $H_L$, is given by
\[
	\omega_2 \sqrt{s^2+1} \int_{l(s)}^{r(s)} \left(f(\tau)^2-(\sqrt{s^2+1}+s \tau)^2\right) d\tau,
\]
where $l(s),r(s)$ are the horizontal coordinates of the intersection of the tangent line, with the boundary of $M$.
Let us make the change of variables $s = \tan(x-\pi/2)$ which is the slope of the tangent line given in Figure \ref{fig_parametrization}.
Parametrizing the boundary of $M$ as in Section \ref{sec_param}, we see that its intersection with the tangent line is given by equations \eqref{eq_parametrization_h_plus} and \eqref{eq_parametrization_h_minus}.
We get
\begin{equation}
	\label{eq_area_x}
	A_K(x) = \frac{\omega_2}{\sin(x)} \int_{\cos(x)-a(x) \sin(x)}^{\cos(x) + a(-x) \sin(x)} \left(f(\tau)^2-\left( \frac{1-\tau \cos(x)}{\sin(x)} \right)^2 \right) d\tau,
\end{equation}
where $a(x) = \tan(h^{-1}(x) - x)$. Now we compute
\begin{align}
	\frac{\partial}{\partial x} (\sin(x) A_K(x)) 
	&= 2\pi \sin(x)^{-3} \int_{\cos(x) - a(x) \sin(x)}^{\cos(x) + a(-x) \sin(x)} (\tau \cos(x)-1)(\tau-\cos(x)) d\tau \\
	&= 2\pi \sin(x)^{-3} \left.\left( \cos(x) \frac {\tau^3}3 - (\cos(x)^2+1) \frac{\tau^2}2 +\cos(x) \tau \right) \right|_{\cos(x) - a(x) \sin(x)}^{\cos(x) + a(-x) \sin(x)} \\
	\label{eq_darea_x} &= \pi \sin(x) ( a(x)^2 - a(-x)^2 ) + \frac {2\pi}3 \cos(x) (a(x)^3 + a(-x)^3),
\end{align}
where significant cancelations take place from the second line to the third one.

If $A_K(x) = A_K$ is constant, then we get the equation
\begin{equation}
	\label{eq_area_constant}
	A_K \cos(x) = \pi \sin(x) ( a(x)^2 - a(-x)^2 ) + \frac{2\pi}3 \cos(x) (a(x)^3 + a(-x)^3).
\end{equation}

\section{The non-linear operator $W$}
\label{sec_operatorW}

Let $\operatorname{Diff}_+(S^1, S^1)$ denote the set of orientation preserving diffeomorphisms of the circle.
Parametrizing the points of $S^1$ as $(\cos(x),\sin(x))$ with $x \in \R$, it can be identified naturally with the following set:
\begin{multline}
	\label{eq_def_diff}
	\operatorname{Diff}_+(S^1, S^1) = \{h: \R \to \R: \text{$h$ is of class $C^\infty$}, \\
	h(t+2\pi) = h(t) + 2\pi \text{ and } h'(t)>0\ \forall t\in\R \}.
\end{multline}

In view of equations \eqref{eq_darea_x} and \eqref{eq_area_constant}, and the discussion in Section \ref{sec_param}, we define the following operator,
\[
	W: \operatorname{Diff}_+(S^1, S^1) \to C^\infty(S^1,\R).
\]
\begin{multline}
	\label{eq_def_W}
	W(h)(x) = \pi \sin(x) \left( \tan(h^{-1}(x) - x )^2 - \tan(h^{-1}(-x)+x)^2\right) \\
	+ \frac{2\pi}3 \cos(x) \left( \tan(h^{-1}(x) - x )^3 + \tan(h^{-1}(-x)+x)^3\right) .
\end{multline}

The set of orientation-preserving diffeomorphisms of the circle $\operatorname{Diff}_+(S^1, S^1)$ is an open subset of $C^\infty(S^1, S^1)$, which is defined as in \eqref{eq_def_diff} but without the condition $h'>0$.
As in Section \ref{sec_param}, any $h \in C^\infty(S^1, S^1)$ is written as $h(t) = t + \varphi(t)$ where $\varphi$ is a $2\pi$-periodic real function, so the space $C^\infty(S^1, S^1)$ is an affine subspace (a vector subspace translated by the identity function) of $C^\infty(\R,\R)$, whose tangent vector space is $C^\infty(S^1, \R)$.
Also, in order for $h(t)$ to represent an invertible map of the circle, we need $\varphi'(t) > -1$ for every $x \in \R$. This condition guarantees that $h \in \operatorname{Diff}_+(S^1, S^1)$.
As mentioned in Section \ref{sec_param}, we restrict $\varphi$ to be an even, $\pi$-periodic function. Notice that $W(h)$ is always even and $\pi$-antiperiodic, meaning that $W(h)(x+\pi) = -W(h)(x)$ for every $h$ and $x$.

Summarizing, $W$ is defined as
\[W: e + U \subseteq e + X \to Y\]
where
\begin{align}
	X &= \{\varphi \in C^\infty(\R,\R) : \varphi(t+\pi) = \varphi(t), \varphi(-t) = \varphi(t)\},\\
	U &= \{\varphi \in X: 1+\varphi'(t) > 0\},\\
	Y &= \{\varphi \in C^\infty(\R,\R) : \varphi(x+\pi) = -\varphi(x), \varphi(-x) = \varphi(x)\},
\end{align}
and $e = \id_\R$.
Notice that the choice of variables $t,x$ for functions in $X$ and $Y$ respectively, are consistent with the angles in Section \ref{sec_param}

Using Fourier series, we can easily describe bases for $X,Y$:
\begin{align}
	\left\{ \cos(2 n t): n = 0, 1, 2 \ldots \right\} & \subseteq X,\\
	\left\{ \cos((2n+1) x): n = 0, 1, 2 \ldots \right\} & \subseteq Y.
\end{align}
In this paper we will work only with trigonometric polynomials and it will not be necessary to deal with any topological considerations for the spaces $X,Y$.
In other words, we will treat these bases as algebraic bases.

\subsection{Approximation and differentials of operators}
Now we start the approximation process.
The starting point is the function
\[h_{\lambda_0}(t) = t-\pi/3,\]
which represents the circumference of radius $2$.
The application of $W$ gives
\[W(h_{\lambda_0})(x) = \lambda_0 \cos(x).\]
Here the value of $\lambda_0 = \omega_3 3^{3/2} = 4 \sqrt{3}\pi$ is the $3$-dimensional volume of the $1$-sections of the ball of radius $2$ in $\R^4$.

We search a solution of the equation
\begin{equation}
	W(h_\lambda)(x) = \lambda \cos(x),
\end{equation}
of the form
\[h_\lambda(t) = t+ \sum_{n=0}^\infty \frac {(\lambda-\lambda_0)^n}{n!} \varphi_n(t),\]
where $\varphi_n$ are trigonometric polynomials. Clearly $\varphi_0(t) = -\pi/3$.
Notice here that, in the notation of Section \ref{sec_param},
\begin{align}
\psi(t)
&= - \sum_{n=0}^\infty \frac {(\lambda-\lambda_0)^n}{n!} \varphi_n(t) \\
&= \frac \pi 3 - \sum_{n=1}^\infty \frac {(\lambda-\lambda_0)^n}{n!} \varphi_n(t).
\end{align}

The application of $W$ can be written in the form
\[W(h_\lambda)(x) = \lambda_0 \cos(x) + \sum_{n=1}^\infty \frac {(\lambda-\lambda_0)^n}{n!} \kappa_n(x),\]
since one can see that $W(h_\lambda)(x)$ is an analytic function of the variables $\lambda, x$.
This will be proved in Proposition \ref{prop_analytic} in Section \ref{sec_error_estimates}.

Each $\kappa_n$ is an analytic function of one variable, and they turn out to be trigonometric polynomials, as we shall see in this section.
To compute the functions $\kappa_n$ we need the language of higher differentials and multilinear operators in vector spaces.

We define the $n$-th differential of a function $F: e+U \to Y$ as the $n$-linear operator $F^{(n)}(f_0): X \times \cdots \times X \to Y$ given by
\[
	F^{(n)}(f_0)[v_1, \ldots, v_n](x) = \left. \frac{\partial^n}{\partial t_1 \cdots \partial t_n} F(f_0 + t_1 v_1 + \cdots + t_n v_n)(x) \right|_{t_1 = \cdots = t_n=0}.
\]
This is a symmetric multilinear operator in the variables $v_1, \ldots, v_n \in X$.
The Faa-di Bruno formula computes the $n$-th differential of a composition as
\begin{equation}
	\label{eq_faadibruno}
	\left. \left( \frac\partial{\partial \lambda}\right)^N F(h_\lambda)(x) \right|_{\lambda=0} = 
		\sum_{m_1 + 2 m_2 + \cdots + N m_N = N} C_{N, m_i} F^{(r)}[\varphi_1^{[m_1]}, \ldots \varphi_N^{[m_N]}](x),
\end{equation}
where the sum is over the non-negative integers $m_1, \ldots, m_N$, and where
\[C_{N, m_i} = \frac {N!}{m_1! \cdots m_N! 1!^{m_1} 2!^{m_2} \cdots N!^{m_N}}.\]

For example,
\begin{align}
	\left. \frac\partial{\partial \lambda} F(h_\lambda)(x) \right|_{\lambda=0} &= F'(f_0)[\varphi_1](x), \\
		\left. \left(\frac\partial{\partial \lambda}\right)^2 F(h_\lambda)(x) \right|_{\lambda=0} &= F''(f_0)[\varphi_1, \varphi_1](x) + F'(f_0)[\varphi_2](x), \\
			\left. \left(\frac\partial{\partial \lambda}\right)^3 F(h_\lambda)(x) \right|_{\lambda=0} &= F'''(f_0)[\varphi_1, \varphi_1, \varphi_1](x) + 3F''(f_0)[\varphi_1, \varphi_2](x) + F'(f_0)[\varphi_3](x).
\end{align}

This way we may compute 
\begin{align}
	\kappa_1 &= W'(f_0)[\varphi_1] \label{eq_faa_1} \\
	\kappa_2 &= W''(f_0)[\varphi_1, \varphi_1] + W'(f_0)[\varphi_2] \label{eq_faa_2} \\
	\kappa_3 &= W'''(f_0)[\varphi_1, \varphi_1, \varphi_1] + 3W''(f_0)[\varphi_1, \varphi_2] + W'(f_0)[\varphi_3] \\
	\cdots \\
	\kappa_N &= W^{(n)}(f_0)[\varphi_1, \ldots, \varphi_1] + \cdots + N W''(f_0)[\varphi_1, \varphi_{N-1}] + W'(f_0)[\varphi_N]
\end{align}
and the goal is to find a sequence $\varphi_n$ such that $\kappa_n \equiv 0$ for every $n \geq 1$, and $\kappa_1(x) = \cos(x)$.

It is of vital importance that $\kappa_N(x)$ has the form
\[A(x) + B[\varphi_1, \varphi_{N-1}](x) + L[\varphi_N](x),\]
where $A$ is independent of $\varphi_{N-1}, \varphi_N$, $B$ is bilinear and $L$ is linear.
Moreover, the operators $B[\varphi_1, \cdot]$ and $L$ will be computed explicitly.

\subsection{Differential, Kernel and Image}

The differential of $W$ can be computed as
\[W'(h)[V] := \left.\frac\partial {\partial t} \right|_{t=0}W(h+t V).\]

After lengthy computations which we omit, we arrive at the following formula:
\[W'(f_0)[V](x) = -16 \sqrt{3} \pi V\left(x-\frac{\pi }{3}\right) \cos \left(x+\frac{\pi }{6}\right)-16 \sqrt{3} \pi V\left(x+\frac{\pi }{3}\right) \cos \left(\frac{\pi }{6}-x\right)\]

The following table describes the values of $-\frac 1{24 \pi} W'(h)$, in the specified basis of $X$ and $Y$.
The $(n+1)$-th column corresponds to the coefficients of $-\frac 1{24\pi} W'(f_0)[\cos(2n \ \cdot\ )]$ in the basis $\{\cos(x), \cos(3x), \ldots\}$.
\begin{equation}
	\label{tab_D1}
\begin{array}{c|ccccccccc}
 \text{} & 1 & \cos (2 t) & \cos (4 t) & \cos (6 t) & \cos (8 t) & \cos (10 t) & \cos (12 t) & \cos (14 t) \\
 \hline
 \cos (x) & 2 & -1 & 0 & 0 & 0 & 0 & 0 & 0 \\
 \cos (3 x) & 0 & 0 & 0 & 0 & 0 & 0 & 0 & 0 \\
 \cos (5 x) & 0 & 0 & -1 & 1 & 0 & 0 & 0 & 0 \\
 \cos (7 x) & 0 & 0 & 0 & 1 & -1 & 0 & 0 & 0 \\
 \cos (9 x) & 0 & 0 & 0 & 0 & 0 & 0 & 0 & 0 \\
 \cos (11 x) & 0 & 0 & 0 & 0 & 0 & -1 & 1 & 0 \\
 \cos (13 x) & 0 & 0 & 0 & 0 & 0 & 0 & 1 & -1 \\
 \cos (15 x) & 0 & 0 & 0 & 0 & 0 & 0 & 0 & 0 \\
\end{array}
\end{equation}
We see that the kernel and image are given by
\begin{align}
	\ker(W'(h_{\lambda_0})) &= \left\langle \cos ((6 n-2) t) + \cos (6 n t) + \cos ((6 n+2) t): n = 0,1,2, \ldots \right\rangle,\\
	\im(W'(h_{\lambda_0})) &= \left\langle \cos ((6 n+1)x), \cos((6n+5)x) : n = 0,1,2,\ldots \right\rangle.
\end{align}
The first generator of $\ker(W'(h_{\lambda_0}))$ is with $n=0$, that is, $1 + 2 \cos(2t)$.
We denote
\[
	K_n(t) = \cos ((6 n-2) t) + \cos (6 n t) + \cos ((6 n+2) t)
\]
for $n\geq 0$.

\section{Approximation}
\label{sec_approximation}

\subsection{First derivative}

To compute the first term $\varphi_1$ we take the first derivative in
\[W(\id_\R + \varphi_0 + (\lambda-\lambda_0) \varphi_1 )(x) = \lambda \cos(x)\]
at $\lambda = \lambda_0$,
and obtain
\[
	W'(h_{\lambda_0})[\varphi_1](x) = \cos(x).
\]
The solution will have the form $\varphi_1 = w_1 + k_1$ where $w_1$ is a particular solution, and $k_1 \in \ker(W'(h_{\lambda_0}))$.
In view of the first column of the table for the first differential, we have a particular solution
\[
	w_1(t) = -\frac 1{48\pi}.
\]
To determine $k_1$ we need to consider the second differential.

\subsection{Second derivative, the bifurcation direction}

The solution of $\varphi_1$ will be of the form $\varphi_1(t) = w_1(t) + \mu_0 K_0(t) + \mu_1 K_1(t) + \cdots$ with $\mu_i \in \R$.
We propose a solution in a simple form
\[\varphi_1(t) = w_1(t) + \mu_0 K_0(t) = -\frac 1{48 \pi} + \mu_0 (1+2\cos(2t)),\]
for some $\mu_0 \in \R$.

Take the second derivative of
\[W\left(\id_\R + \varphi_0 + (\lambda-\lambda_0) \varphi_1 + \frac{(\lambda-\lambda_0)^2}2 \varphi_2 + o((\lambda-\lambda_0)^2) \right)(x) = \lambda \cos(x),\]
with respect to $\lambda$ at $\lambda = \lambda_0$.
By \eqref{eq_faa_2} we get
\begin{align}
	\label{eq_step2}
	W''(h_{\lambda_0})[w_1 + \mu_0 K_0, w_1 + \mu_0 K_0] + W'(h_{\lambda_0})[\varphi_2] = 0.
\end{align}
In order for this equation to have a solution it is necessary that 
\begin{equation}
	\label{eq_image_condition}
	W''(h_{\lambda_0})[w_1 + \mu_0 K_0, w_1 + \mu_0 K_0] \in \im(W'(h_{\lambda_0})).
\end{equation}
The left hand side (we omit again the computations) in the basis of $Y$ is
\[
	\frac{7 \left(2304 \pi ^2 \mu _0^2+1\right)}{24 \sqrt{3} \pi } \cos (x)
	+4 \sqrt{3} \left(1-24 \pi \mu _0\right) \mu _0 \cos (3 x)
	-128 \sqrt{3} \pi \mu _0^2 \cos (5 x).
\]
For this function to belong to $\im(W'(h_{\lambda_0}))$ we need the second term to vanish because the image is generated by $\cos(x), \cos(5x), \ldots$ (see the matrix \eqref{tab_D1}), then
\[(1-24 \pi \mu _0)\mu _0=0.\]
This is a quadratic equation in $\mu_0$. The two solutions are $\mu_0=0, \frac 1{24 \pi}$.
The first solution $\mu_0 = 0$ will generate the trivial branch of solutions, the second one $\mu_0 = \frac 1{24 \pi}$ will generate the bifurcation.
To see this, recall that
\[W[\id_\R - \arctan((\lambda/\omega_3)^{1/3}) ](x) = \lambda \cos(x),\]
so the trivial branch of solutions is given by
\[t + \sum \frac{(\lambda-\lambda_0)^n}{n!} \varphi_n^{(\text{trivial})}(t) = t - \arctan((\lambda/\omega_3)^{1/3}),\]
and for every $n$, $\varphi_n^{(\text{trivial})}(t)$ is the constant function,
\[\varphi_n^{(\text{trivial})}(t) = \left. - \frac{d^n}{d\lambda^n}\arctan((\lambda/\omega_3)^{1/3}) \right|_{\lambda=\lambda_0}.\]
The first values are
\[
	\varphi_0^{(\text{trivial})}(t) = -\frac{\pi}3,
	\quad \varphi_1^{(\text{trivial})}(t) = -\frac 1{48 \pi} = w_1(t),
	\quad \varphi_2^{(\text{trivial})}(t) =\frac {7}{1152 \sqrt 3 \pi^2}.
\]

Going back to the non-trivial solution, we take $\mu_0 = \frac 1{24 \pi}$ and we get
\begin{equation}
	\label{eq_varphi1}
	\varphi_1(t) = -\frac 1{48 \pi} + \frac 1{24 \pi} (1+2\cos(2t)).
\end{equation}

One could also propose a more general solution
\[\varphi_1 = w_1 + \mu_0 K_0 + \mu_1 K_1 + \cdots,\]
but the condition \eqref{eq_image_condition} yields always $\mu_0 = 0$ or $\frac 1{24 \pi}$, and $\mu_i = 0$ for $i \geq 1$.

To find $\varphi_2$ we replace the computed value for $\mu_0$ in \eqref{eq_step2}, and compute the second differential to get
\begin{equation}
	\label{eq_step2w}
	W'(h_{\lambda_0})[w_2](x) = \frac{35}{24 \sqrt{3} \pi } \cos (x) - \frac{2}{3 \sqrt{3} \pi } \cos (5 x).
\end{equation}

To solve this linear equation we use part of the matrix in Table \ref{tab_D1},
\[
\left(
\begin{array}{cc}
 24 \pi & 0 \\
 0 & 0 \\
 0 & 24 \pi \\
\end{array}
\right).\left(
\begin{array}{c}
 a_1 \\
 a_2 \\
\end{array}
\right)=\left(
\begin{array}{c}
	-\frac{35}{24 \sqrt{3} \pi } \\
	 0 \\
	\frac{2}{3 \sqrt{3} \pi }
\end{array}
\right),
\]
and its solution is
\begin{align}
	w_2(t) 
	&= a_1 \cos (2 t) + a_2\cos (4 t) \\
	&= -\frac{35}{576 \sqrt{3} \pi ^2} \cos (2 t) + \frac{1}{36 \sqrt{3} \pi ^2}\cos (4 t)
\end{align}
Again, this is a particular solution of \eqref{eq_step2w}.
The general solution for $\varphi_2$ is
\[\varphi_2 = w_2 + k_2\]
where $k_2 \in \ker(W'(h_{\lambda_0}))$.

\subsection{Third derivative and beyond}

For higher-order terms, the scheme will be always the same.
Assume inductively that we have computed trigonometric polynomials $\varphi_n \in X$ for $n = 0, \ldots, N-1$, and $w_N \in X$, which satisfy that
\[
	\left.\left(\frac\partial{\partial \lambda} \right)^N W\left( t + \sum_{n=0}^{N-1} \frac {(\lambda-\lambda_0)^n}{n!} \varphi_n(t) + \frac {(\lambda-\lambda_0)^N}{N!} w_N(t) \right) \right|_{\lambda=\lambda_0} = 0.
\]
We want to compute $k_N \in \ker(W'(h_{\lambda_0})), w_{N+1} \in X$ such that
\begin{multline}
	\left. \left(\frac\partial{\partial \lambda} \right)^{N+1} W\left( t +\sum_{n=0}^{N-1} \frac {(\lambda-\lambda_0)^n}{n!} \varphi_n(t) \right.\right. \\ \left. \left. + \frac {(\lambda-\lambda_0)^N}{N!} (w_N(t) + k_N(t)) + \frac {(\lambda-\lambda_0)^{N+1}}{(N+1)!} w_{N+1}(t) \right) \right|_{\lambda=\lambda_0} = 0.
\end{multline}
Then we define $\varphi_N = w_N + k_N$, and use $\varphi_N, w_{N+1}$ for the next step.

According to \eqref{eq_faadibruno}, the $(N+1)$-th derivative has the form
\[ A(x) + (N+1) W''(h_{\lambda_0})[\varphi_1, w_N + k_N](x) + W'(h_{\lambda_0})[w_{N+1}](x) = 0,\]
where $A$ collects all the terms involving higher differentials of $W$.
We need to solve for $(k_N, w_{N+1})$ the equation
\begin{equation}
	\label{eq_stepN}
	(N+1) W''(h_{\lambda_0})[\varphi_1, k_N] + W'(h_{\lambda_0})[w_{N+1}] = - A(x) - (N+1) W''(h_{\lambda_0})[\varphi_1, w_N],
\end{equation}
which is a linear equation. Let us show that this equation has always a solution.

The first term in the left-hand side can be computed explicitly: after lengthy computations which we omit, we arrive at

\begin{multline}
	\label{eq_DW2}
	W''(h_{\lambda_0})[\varphi_1, K_N] =
	\frac 1{\sqrt 3} \left(
	(-6 N-8) \cos ((6 N-5) x)+(6 N-3) \cos ((6 N-3) x) \right. \\ \left. 
		+(14-6 N) \cos ((6 N-1) x)+(6 N+14) \cos ((6 N+1) x) \right. \\ \left. 
			+(-6 N-3) \cos ((6 N+3) x)+(6 N-8) \cos ((6 N+5) x)
\right).
\end{multline}
The following table shows the coordinates of $\sqrt 3 W''(h_{\lambda_0})[\varphi_1, K_n]$ for $n = 0, \ldots 5$, in the basis of $Y$.
\begin{equation}
	      \label{tab_D2}
\begin{array}{c|cccccc}
 \text{} & K_0 & K_1 & K_2 & K_3 & K_4 & K_5 \\
 \hline
 \cos (x) & 28 & -14 & 0 & 0 & 0 & 0 \\
 \cos (3 x) & -6 & 3 & 0 & 0 & 0 & 0 \\
 \cos (5 x) & -16 & 8 & 0 & 0 & 0 & 0 \\
 \cos (7 x) & 0 & 20 & -20 & 0 & 0 & 0 \\
 \cos (9 x) & 0 & -9 & 9 & 0 & 0 & 0 \\
 \cos (11 x) & 0 & -2 & 2 & 0 & 0 & 0 \\
 \cos (13 x) & 0 & 0 & 26 & -26 & 0 & 0 \\
 \cos (15 x) & 0 & 0 & -15 & 15 & 0 & 0 \\
 \cos (17 x) & 0 & 0 & 4 & -4 & 0 & 0 \\
 \cos (19 x) & 0 & 0 & 0 & 32 & -32 & 0 \\
 \cos (21 x) & 0 & 0 & 0 & -21 & 21 & 0 \\
 \cos (23 x) & 0 & 0 & 0 & 10 & -10 & 0 \\
 \cos (25 x) & 0 & 0 & 0 & 0 & 38 & -38 \\
 \cos (27 x) & 0 & 0 & 0 & 0 & -27 & 27 \\
 \cos (29 x) & 0 & 0 & 0 & 0 & 16 & -16 \\
\end{array}
\end{equation}

One can see that for every $n$ there is a $3n \times 3n$ invertible matrix $M_{3n}$ formed by the first $n$ columns of the matrix \eqref{tab_D2}, and the columns $2, 3, 5,6, 8,9, \ldots 3n$ of the matrix \eqref{tab_D1}.
For example, for $n=2$ we have
\begin{equation}
	M_6 = \label{tab_D12}
	\left(
\begin{array}{cccccc}
 24 \pi & 0 & 0 & 0 & \frac{28}{\sqrt{3}} & -\frac{14}{\sqrt{3}} \\
 0 & 0 & 0 & 0 & -2 \sqrt{3} & \sqrt{3} \\
 0 & 24 \pi & 0 & 0 & -\frac{16}{\sqrt{3}} & \frac{8}{\sqrt{3}} \\
 0 & 0 & 24 \pi & 0 & 0 & \frac{20}{\sqrt{3}} \\
 0 & 0 & 0 & 0 & 0 & -3 \sqrt{3} \\
 0 & 0 & 0 & 24 \pi & 0 & -\frac{2}{\sqrt{3}} \\
\end{array}
\right).
\end{equation}
Now recall that $A(x)$ is a trigonometric polynomial, as it is a sum of products of $\varphi_n, n=0, \ldots, N$ and their derivatives.
To find a solution of equation \eqref{eq_stepN}, one just takes the matrix $M_{3n}$ with $3n$ larger than the degree of $A(x)$ as a trigonometric polynomial.
Notice that the coefficient $(-6N-3)$ of $\cos((6N+3) x)$ is always non-zero. This is enough to triangulate $M_{3n}$ by lines, obtaining the identity matrix. 

Since $M_{3n}$ is invertible for every $n$, we just proved that equation \eqref{eq_stepN} always has a solution.

\section{Error estimates}
\label{sec_error_estimates}
At this point we know the existence of trigonometric polynomials $\varphi_n$ for all $n \geq 0$, with the property that
\[
	\left. \left( \frac\partial{\partial \lambda}\right)^N W\left(\id_\R + \sum_{n=0}^N \frac {(\lambda-\lambda_0)^n}{n!} \varphi_n\right)(x) \right|_{\lambda=\lambda_0} = 0.
\]

We show here the first terms of the series, which were computed symbolically using Mathematica 14.2:
\begin{align}
	\varphi _1(x) &= \frac{1}{48 \pi } + \frac{\cos (2 x)}{12 \pi } \\
 \varphi _2(x) &= \frac{\sqrt{3}}{128 \pi ^2} -\frac{\cos (2 x)}{72 \sqrt{3} \pi ^2}+\frac{\cos (4 x)}{36 \sqrt{3} \pi ^2} \\
 \varphi _3(x) &= -\frac{41}{10368 \pi ^3} + \frac{35 \cos (2 x)}{1296 \pi ^3}+\frac{49 \cos (6 x)}{10368 \pi ^3}-\frac{49 \cos (4 x)}{10368 \pi ^3}-\frac{67 \cos (8 x)}{10368 \pi ^3}\\
 \varphi _4(x) &= \frac{8513}{331776 \sqrt{3} \pi ^4} -\frac{479 \cos (2 x)}{10368 \sqrt{3} \pi ^4}+\frac{293 \cos (4 x)}{6912 \sqrt{3} \pi ^4}+\frac{755 \cos (8 x)}{31104 \sqrt{3} \pi ^4}-\frac{101 \cos (6 x)}{10368 \sqrt{3} \pi ^4}\\& -\frac{67 \cos (10 x)}{31104 \sqrt{3} \pi ^4} \\
 \varphi _5(x) &= -\frac{126461}{5971968 \pi ^5} + \frac{125035 \cos (2 x)}{1492992 \pi ^5}+\frac{138595 \cos (6 x)}{4478976 \pi ^5}+\frac{1757 \cos (10 x)}{4478976 \pi ^5}\\& +\frac{22445 \cos (14 x)}{4478976 \pi ^5} -\frac{86915 \cos (4 x)}{2239488 \pi ^5}-\frac{159805 \cos (8 x)}{4478976 \pi ^5}-\frac{22445 \cos (12 x)}{4478976 \pi ^5} \\
 \varphi _6(x) &=\frac{845965}{5971968 \sqrt{3} \pi ^6} -\frac{595565 \cos (2 x)}{1679616 \sqrt{3} \pi ^6}+\frac{14909705 \cos (4 x)}{53747712 \sqrt{3} \pi ^6}+\frac{5614835 \cos (8 x)}{26873856 \sqrt{3} \pi ^6}\\& +\frac{704035 \cos (12 x)}{17915904 \sqrt{3} \pi ^6} -\frac{582145 \cos (14 x)}{13436928 \sqrt{3} \pi ^6}-\frac{2444945 \cos (6 x)}{17915904 \sqrt{3} \pi ^6}-\frac{530275 \cos (10 x)}{26873856 \sqrt{3} \pi ^6}\\& -\frac{157115 \cos (16 x)}{26873856 \sqrt{3} \pi ^6} \\
\end{align}

\begin{figure}
	\caption{The curves obtained parametrized by $h_{6,\lambda}$ for the values $\lambda_0-5, \lambda_0-2, \lambda_0 + 2, \lambda_0 + 5$ from left to right.}
	\label{fig_curves}
	\includegraphics[width=\textwidth]{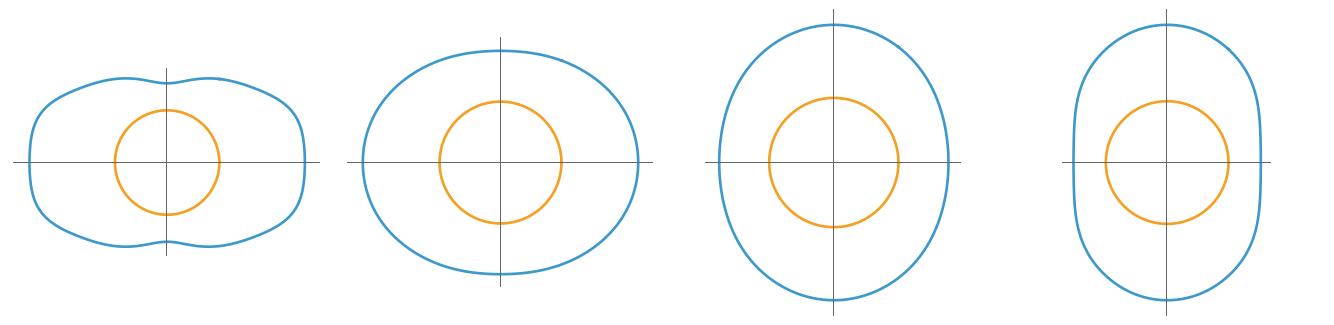}
\end{figure}

Unfortunately, we were not able to show that the power series is convergent.
Instead we will study the partial sums, as approximate counterexamples for the Barker--Larman problem.

From now on, we write
\[h_{\lambda,N}(t) = t - \frac \pi 3 + \sum_{n=1}^N \frac{(\lambda - \lambda_0)^n}{n!} \varphi_n(t)\]
for every partial sum.
Figure \ref{fig_curves} shows some examples of bodies created with any of the parametrizations \eqref{eq_parametrization_h_plus} or \eqref{eq_parametrization_h_minus}, using $h = h_{\lambda,N}$.
Notice that one can obtain also non-convex examples.

For $r>0$ and $z \in \C$ we denote $B^\C_r(z) \subseteq \C$ the open Euclidean ball of centre $z$ and radius $r$.
If $z \in \R$, we denote $B^\R_r(z) = (z-r,z+r)$ the real ball.
The closed balls will be denoted by $\bar B^\C_r(z)$ and $\bar B^\R_r(z)$.

For a function $\varphi:\R \to \R$ which is periodic, 
\[\| \varphi \|_\infty = \max_{t \in \R} |\varphi(t)| \]
denotes the $\infty$-norm in the real line. In practice these will be periodic functions, so it suffices to consider $x \in [0,\pi]$. 
\begin{proposition}
	\label{prop_analytic}
Let 
\[h_{\lambda,N}(t) = t - \frac \pi 3 + \sum_{n=1}^N \frac{(\lambda-\lambda_0)^n}{n!} \varphi_n(t)\]
and take $r>0$ such that
\begin{equation}
	\label{eq_series_bound}
	\sum_{n=1}^N \frac{r^n}{n!} \|\varphi_n'\|_\infty \leq \frac 12 \text{ and } \sum_{n=1}^N \frac{r^n}{n!} \|\varphi_n\|_\infty \leq \frac \pi {12}.
\end{equation}
Then $W(h_{\lambda,N})(x)$ is an analytic function in the variables $\lambda, x$, with $\lambda \in B^\C_r(\lambda_0), |\im(x)| < \eta$, for some $\eta>0$.
Moreover, there is a universal constant $c$ for which $|W(h_{\lambda,N})(x)| \leq c$ for every $x \in \R$ and complex $\lambda \in B^\C_r(\lambda_0)$.
\end{proposition}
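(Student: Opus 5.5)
Write $\psi_\lambda(t) = \frac\pi3 - \sum_{n=1}^N \frac{(\lambda-\lambda_0)^n}{n!}\varphi_n(t)$, so that $h_{\lambda,N}(t) = t - \psi_\lambda(t)$. For complex $\lambda$ and $t$ this is a polynomial in $\lambda$ and an entire function of $t$. The plan is to show that the inverse $g_\lambda = h_{\lambda,N}^{-1}$ exists and is analytic jointly in $(\lambda,x)$ on a complex neighbourhood of $B^\C_r(\lambda_0)\times\R$. We then show that $g_\lambda(x)-x$ stays in a compact region where $\tan$ is analytic and bounded. The formula \eqref{eq_def_W} then exhibits $W(h_{\lambda,N})(x)$ as a polynomial in $\sin x$, $\cos x$, $\tan(g_\lambda(x)-x)$ and $\tan(g_\lambda(-x)+x)$, which is analytic and bounded.

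\emph{Step 1: inversion.} By \eqref{eq_relation_a_phi_h}, $s = g_\lambda(x)$ solves the fixed point equation $s = x + \psi_\lambda(s)$. I would apply the contraction principle, with parameters, on the strip $S_\eta = \{|\im s|<\eta\}$. For real $t$ and complex $\lambda\in B^\C_r(\lambda_0)$ the hypothesis \eqref{eq_series_bound} gives $|\psi_\lambda'(t)|\le \frac12$ and $|\psi_\lambda(t)-\frac\pi3|\le \frac\pi{12}$. Since each $\varphi_n$ is a trigonometric polynomial, its derivative is uniformly continuous on strips. For $\eta$ small enough (depending on $N$ and the $\varphi_n$) we therefore still have, on $S_\eta$, $|\psi_\lambda'|\le \frac34$ and $|\psi_\lambda-\frac\pi3|\le \frac\pi{12}+\epsilon$. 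To keep the map inside the strip we need $|\im \psi_\lambda(s)|$ small; this holds because the perturbation is small compared with $\eta$ once we restrict $|\im x|<\eta/4$. Then $s\mapsto x+\psi_\lambda(s)$ maps $\bar S_{\eta}$ into itself as a contraction. Its unique fixed point $g_\lambda(x)$ depends analytically on $(\lambda,x)$, being a uniform limit of analytic iterates (equivalently, by the holomorphic implicit function theorem, since $1-\psi'_\lambda\neq0$). This is the step I expect to be the main obstacle. The constants must be chosen so that the strip is invariant uniformly in complex $\lambda$, and the imaginary parts of $\varphi_n(s)$ for complex $s$ are controlled only by a continuity argument, which makes $\eta$ depend on $N$. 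That dependence is allowed, since the statement only asserts some $\eta>0$.

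\emph{Step 2: location of the tangent argument.} From the fixed point equation, $g_\lambda(x) - x = \psi_\lambda(g_\lambda(x))$. Its real part lies in $[\frac\pi3-\frac\pi{12}-\epsilon, \frac\pi3+\frac\pi{12}+\epsilon]\subseteq(\frac\pi6,\frac{5\pi}{12}+\epsilon)$ and its imaginary part is small. The same holds for $g_\lambda(-x)+x$. This region stays at positive distance from the poles $\pm\frac\pi2$ of $\tan$, uniformly in $N$ and $\lambda$ when $x$ is real.

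\emph{Step 3: conclusion.} Composition of analytic functions gives the analyticity of $W(h_{\lambda,N})(x)$ on $B^\C_r(\lambda_0)\times S_{\eta/4}$. For real $x$ the fixed point equation keeps $g_\lambda(x)$ in a set whose imaginary part is controlled only by $\im\psi_\lambda$, and the bound $|\psi_\lambda - \frac\pi3|\le\frac\pi{12}$ holds there without loss. Hence $\tan$ is evaluated in the fixed compact set $\{z : |z-\frac\pi3|\le\frac\pi{12}\}$, on which $|\tan|\le C_0$ for an absolute constant $C_0$, because $\frac\pi3\pm\frac\pi{12}$ stays away from $\frac\pi2$ even with small imaginary part. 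Plugging this into \eqref{eq_def_W} gives $|W(h_{\lambda,N})(x)|\le 2\pi C_0^2+\frac{4\pi}3 C_0^3 =: c$, which is universal, as required.
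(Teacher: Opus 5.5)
\textbf{There is a genuine gap, at the invariance claim in Step 1.} Your route is essentially the paper's: invert $h_{\lambda,N}$ holomorphically on a thin strip, then locate $h_{\lambda,N}^{-1}(x)-x$ near $\frac\pi3$. You invert by a contraction where the paper uses the holomorphic inverse function theorem plus compactness. For real $\lambda$ your Step 1 is fine: $\psi_\lambda$ is then real on $\R$, so $|\im\psi_\lambda(a+ib)|\le\frac34|b|$ on $S_\eta$, and the strip is invariant.

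The proposition, however, concerns all complex $\lambda\in B^{\C}_r(\lambda_0)$, and Proposition \ref{prop_bound_error} needs exactly the circle $|\lambda-\lambda_0|=r_N$. There the invariance claim fails. For real $s$ and $x$,
\[
\im\bigl(x+\psi_\lambda(s)\bigr)=-\im\sum_n\frac{(\lambda-\lambda_0)^n}{n!}\varphi_n(s),
\]
which is created by the complex parameter, not by $\im s$ or $\im x$. For $\lambda=\lambda_0+i\mu$ it equals $-\mu\varphi_1(s)+O(\mu^3)$, and $\varphi_1(0)=\frac5{48\pi}\neq0$. This quantity is comparable to $|\lambda-\lambda_0|$ (anything up to $\frac\pi{12}$) and does not shrink with $\eta$. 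Hence $s\mapsto x+\psi_\lambda(s)$ does not map $\bar S_\eta$ into itself. Moreover, the fixed point $g_\lambda(x)$ has $\im g_\lambda(x)=\im\psi_\lambda(g_\lambda(x))$, so in general it does not lie in $S_\eta$ at all.

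The same point breaks Steps 2 and 3. \eqref{eq_series_bound} gives $|\psi_\lambda-\frac\pi3|\le\frac\pi{12}$ only at real arguments. But $g_\lambda(x)$ is non-real when $\im\lambda\neq0$, and off the real axis a trigonometric polynomial of degree $d$ can exceed its real sup norm by a factor of order $e^{d|\im s|}$. So neither joint analyticity on the whole disc nor the universal bound $c$ follows. The continuity argument you flag as the obstacle controls $\im s$; the actual source of the trouble is $\im\lambda$. The paper's proof has the same weak spot: it uses $h_{\lambda,N}(\R)=\R$ and takes $t=h_{\lambda,N}^{-1}(x)\in\R$, both valid only for real $\lambda$.

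\textbf{How to repair it.} Run your contraction on a strip of fixed width, with the norms in the hypothesis taken on that strip. Fix $\sigma>0$ and $\delta\le\min(\sigma/2,\frac\pi{12})$, and choose $r$ with
\[
\sum_{n=1}^N\frac{r^n}{n!}\sup_{|\im s|\le\sigma}|\varphi_n'(s)|\le\frac12,\qquad\sum_{n=1}^N\frac{r^n}{n!}\sup_{|\im s|\le\sigma}|\varphi_n(s)|\le\delta .
\]
Such $r$ exists because the $\varphi_n$ are entire. Its dependence on $N$ is harmless, since $r_N$ may depend on $N$ downstream. Now take $|\lambda-\lambda_0|<r$ and $|\im x|<\sigma/2$. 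Then $s\mapsto x+\psi_\lambda(s)$ sends $\bar S_\sigma$ into itself and is a $\frac12$-contraction there. Its fixed point is holomorphic in $(\lambda,x)$ and agrees with $h_{\lambda,N}^{-1}(x)$ for real $\lambda,x$. It also satisfies $g_\lambda(\pm x)\mp x=\psi_\lambda(g_\lambda(\pm x))\in\bar B^{\C}_{\delta}(\frac\pi3)$, a disc on which $|\tan|$ is bounded by an absolute constant. Your Step 3 then goes through unchanged.
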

\begin{proof}
	First notice that for $x \in \R$ and $\lambda \in \bar B^\C_r(\lambda_0)$ we have
	\[|h_{\lambda,N}'(t)| \geq 1/2\]
	for every $x \in [0, \pi]$.
	By the inverse function theorem, for every $x \in \R$ there is $\varepsilon_x>0$ such that
	$h_{\lambda,N}$ is injective restricted to $B^\C_{\varepsilon_x}(x)$.
	A simple compactness argument shows that there exists a uniform $\varepsilon>0$ such that
	$h_{\lambda,N}$ is injective in $|\im(z)| \leq \varepsilon$, for every $\lambda \in \bar B^\C_r(\lambda_0)$.
	Moreover, since $h_{\lambda,N}(\R) = \R$, the image $h_{\lambda,N}(\{z \in \C : |\im(z)| < \varepsilon\})$ contains a strip
	\[
		S_\eta = \{z \in \C : |\im(z)| < \eta\},
	\]
	for some fixed $\eta>0$ and every $\lambda \in B_r(\lambda_0)$.
	Considering the function
	\[
		(\lambda, z) \in \C^2 \mapsto (\lambda, h_{\lambda,N}(z)) \in \C^2.
	\]
	We may apply the inverse function theorem for holomorphic functions in several variables (see \cite[Theorem 5.5]{laurent2010holomorphic}) to deduce that its inverse
	\[
		(\lambda, z) \in B_r(\lambda_0) \times S_\eta \subseteq \C^2 \mapsto (\lambda, h_{\lambda,N}^{-1}(z)) \in \C^2
	\]
	is holomorphic. In particular, $h_{\lambda,N}^{-1}(z)$ is a holomorphic function of the two variables $(\lambda, z)$.

	Now we must show that $W(h_{\lambda, N})(x)$ is holomorphic in $B_r(\lambda_0) \times S_\eta$.
	In view of \eqref{eq_def_W}, all we need to show is that $h_{\lambda,N}^{-1}(x) -x$ stays away from the singularities of the function $\tan(z)$, for every $x \in S_\eta$.
	First notice that $|h_{\lambda,N}(t) - t + \frac \pi 3| \leq \frac \pi {12}$ for every $t \in \R, \lambda \in B_r(\lambda_0)$.
	Put $t = h_{\lambda,N}^{-1}(x) \in \R$ to obtain
	\[
		|h_{\lambda,N}^{-1}(x) - x - \frac \pi 3| \leq \frac \pi {12},
	\]
	for every $x \in \R, \lambda \in B_r(\lambda_0)$.
	By the periodicity of $h_{\lambda,N}^{-1}(x) - x$, we may shrink $\eta$ if necessary, to assume that
	\[
		|h_{\lambda,N}^{-1}(x) - x - \frac \pi 3| \leq \frac \pi {9},
	\]
	for every $x \in S_\eta, \lambda \in B_r(\lambda_0)$.

	Notice that the function $\tan(z)$ is analytic in the closed ball $\bar B^\C_{\pi/9}(-\pi/3) \subseteq \C$.
	By the compactness of the closed ball, 
	\[|W(h_{\lambda,N})(x)| \leq \max_{z \in \bar B^\C_{\pi/9}(-\pi/3)} \frac{2\pi}3 |\tan(z)|^3+2\pi |\tan(z)|^2 \leq c.\]
	where $c$ is a universal constant.
\end{proof}

\begin{proposition}
	\label{prop_bound_Rr}
	For every $N>0$ there is $r_N$ such that for $\lambda \in (\lambda_0-r_N, \lambda_0+r_N)$,
	\[ |R(K_{\lambda,N}) - r(K_{\lambda,N})| \geq c |\lambda - \lambda_0|, \]
	where $c>0$ is a universal constant.
\end{proposition}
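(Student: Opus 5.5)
The boundary of the planar body generating $K_{\lambda,N}$ is given in polar form by
\[\rho_\lambda(t)=\cos(\psi_\lambda(t))^{-1}, \qquad \psi_\lambda(t)=t-h_{\lambda,N}(t)=\frac\pi3-\sum_{n=1}^N\frac{(\lambda-\lambda_0)^n}{n!}\varphi_n(t).\]
For a body of revolution symmetric about the origin, $R(K)\ge \max_t\rho(t)$ and $r(K)\le\min_t\rho(t)$, since the radial function is measured from the origin. Hence it suffices to show
\[\max_t\rho_\lambda(t)-\min_t\rho_\lambda(t)\ge c|\lambda-\lambda_0|,\]
and I would compare the values at $t=0$ and $t=\pi/2$.

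The key input is the explicit first-order term $\varphi_1(t)=\frac1{48\pi}+\frac{\cos(2t)}{12\pi}$, which is not constant. This is exactly the choice $\mu_0=\frac1{24\pi}$ of the non-trivial branch. Its values are $\varphi_1(0)-\varphi_1(\pi/2)=\frac{1}{6\pi}$.

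Put $\delta=\lambda-\lambda_0$ and $g(\psi)=\sec\psi$. Then $g'(\pi/3)=\sec(\pi/3)\tan(\pi/3)=2\sqrt3\ne0$. Taylor expansion gives
\[\rho_\lambda(t)=2-2\sqrt3\,\delta\,\varphi_1(t)+E(t,\delta).\]
The remainder satisfies $|E(t,\delta)|\le C_N\delta^2$, uniformly in $t$, for $|\delta|\le r$ with $r$ as in \eqref{eq_series_bound}. This holds because the $\varphi_n$ are bounded trigonometric polynomials and $\psi_\lambda$ stays in $[\pi/4,5\pi/12]$, where $g$ and $g''$ are bounded by absolute constants. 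Therefore
\[|\rho_\lambda(0)-\rho_\lambda(\pi/2)|\ge \frac{2\sqrt3}{6\pi}|\delta|-2C_N\delta^2.\]
Choose $r_N\le r$ small enough that $2C_Nr_N\le\frac{\sqrt3}{6\pi}$. This yields the bound with the universal constant $c=\frac{\sqrt3}{6\pi}$. Only the threshold $r_N$ depends on $N$, which is consistent with the statement.

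The main point needing care is the claim that $R(K)-r(K)\ge\max\rho-\min\rho$. Here $R$ and $r$ should be understood as the circumradius and inradius centered at the origin, which is the natural center for an origin-symmetric body. If one insists on the optimal center, the origin is still optimal by symmetry. For the circumradius: if $K\subseteq B(z,R)$ then $K=-K\subseteq B(-z,R)$, so $K\subseteq B(z,R)\cap B(-z,R)\subseteq B(0,R)$. For the inradius: if $B(z,r)\subseteq K$ then $B(-z,r)\subseteq K$, and convexity gives $B(0,r)\subseteq K$. The remaining ingredient is that $h_{\lambda,N}$ is a genuine diffeomorphism for small $\delta$, so that $\rho_\lambda$ is the radial function of $K_{\lambda,N}$; this follows from the derivative bound in \eqref{eq_series_bound}.
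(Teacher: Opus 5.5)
Your proof is correct and follows essentially the same route as the paper: evaluate the radial function $\cos(\psi_\lambda(t))^{-1}$ at $t=0$ and $t=\pi/2$, where the non-constant first-order term $\varphi_1$ (coming from the choice $\mu_0=\frac{1}{24\pi}$) separates the two values at order $|\lambda-\lambda_0|$, and absorb the higher-order terms by shrinking $r_N$. Your version improves on the paper in two small ways. It treats both signs of $\lambda-\lambda_0$ at once. It also justifies explicitly, via origin symmetry, that $R$ and $r$ may be measured from the origin, which the paper leaves implicit. Note that convexity is needed for the inradius part of that argument; it comes from the $C^2$-smallness of the perturbation (part (f) of the lemma in Section~\ref{sec_param}), not merely from $h_{\lambda,N}$ being a diffeomorphism.
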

\begin{proof}
	Let $r_N>0$ be small enough so that \eqref{eq_big_angle} and \eqref{eq_small_angle} hold for all $\lambda \in (\lambda_0-r_N,\lambda_0+ r_N)$, and such that \eqref{eq_series_bound} holds with $r = r_N$.
	According to our parametrization \eqref{eq_parametrization_gamma} of $K_{\lambda,N}$, for each $t \in [0,\pi]$ and $\lambda \in (\lambda_0-r_N, \lambda_0 + r_N)$, there is a point $\gamma_{K_{\lambda,N}}(t)$ at distance $\cos(h(t)-t)^{-1}$ from the origin.
	Consider only $\lambda \in [\lambda_0, \lambda_0 + r_N)$. Taking a smaller $r_N$ if necessary, we may bound $h_{\lambda,N}$ using only the linear part,
	\begin{align}
		\label{eq_big_angle}
		-h_{\lambda,N}(0) &= \frac \pi 3 - (\lambda-\lambda_0) \varphi_1(0) - \sum_{n=2}^N \frac {(\lambda-\lambda_0)^n}{n!} \varphi_n(0)\\
		&\leq \frac \pi 3 - \frac 12 (\lambda-\lambda_0) \varphi_1(0)\\
		&= \frac \pi 3 - \frac 5{96 \pi} (\lambda-\lambda_0),
	\end{align}
	and
	\begin{align}
		\label{eq_small_angle}
	\frac \pi 2 - h_{\lambda,N}\left(\frac \pi 2\right)
		&= \frac \pi 3 - (\lambda-\lambda_0) \varphi_1(\pi/2) - \sum_{n=2}^N \frac {(\lambda-\lambda_0)^n}{n!} \varphi_n(\pi/2)\\
		&\geq \frac \pi 3 - \frac 12 (\lambda-\lambda_0) \varphi_1(\pi/2)\\
		&= \frac \pi 3 + (\lambda-\lambda_0) \frac 1{32 \pi},
	\end{align}
	where we used \eqref{eq_varphi1} in both inequalities, and the fact that $\varphi_1(\pi/2) < 0 < \varphi_1(0)$.

	Taking $t = 0, \frac \pi 2$ in \eqref{eq_parametrization_gamma} we obtain for $\lambda \in [\lambda_0, \lambda_0 + r_N),$
	\[R(K_{\lambda,N}) \geq \cos\left(\frac \pi 2 - h_{\lambda,N}\left(\frac \pi 2\right)\right)^{-1} \geq \cos(\frac \pi 3 + \frac 1{32 \pi} (\lambda-\lambda_0))^{-1} \]
	and
	\[r(K_{\lambda,N}) \leq \cos(0 - h_{\lambda,N}(0)) \leq \cos(\frac \pi 3 - \frac 5{96 \pi} (\lambda-\lambda_0))^{-1}.\]
	Estimating the difference $R(K_{\lambda,N}) - r(K_{\lambda,N})$ by its first Taylor expansion, we obtain the result for sufficiently small  $r_N>0$.
	A similar estimate holds for $\lambda \in (\lambda_0 - r_N,\lambda_0]$.
\end{proof}

\begin{proposition}
	\label{prop_bound_error}
	For any $N \in \N$ there is $r_N > 0$ such that
	\[|W(h_{\lambda,N})(x) - \lambda \cos(x)| \leq c \left| \frac{\lambda - \lambda_0}{r_N} \right|^{N+1}\]
	for $\lambda \in B^\C_{r_N}({\lambda_0})$, where $c>0$ is a universal constant.
\end{proposition}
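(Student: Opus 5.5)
Fix $N$ and $x$, and study the error as a function of $\lambda$. Let $r_N>0$ be small enough that \eqref{eq_series_bound} holds with $r = 2r_N$, and apply Proposition \ref{prop_analytic} with radius $2r_N$. For each real $x$ the function
\[
g_x(\lambda) = W(h_{\lambda,N})(x) - \lambda\cos(x)
\]
is then holomorphic on $B^\C_{2r_N}(\lambda_0)$. It satisfies $|g_x(\lambda)| \le c + |\lambda| \le c + \lambda_0 + 2r_N$, and since we may assume $r_N \le 1$, this bound is a universal constant $c'$.

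By construction of the $\varphi_n$ (Section \ref{sec_approximation}), the derivatives $\partial_\lambda^k W(h_{\lambda,N})(x)|_{\lambda=\lambda_0}$ for $k \le N$ agree with those of $\lambda\cos(x)$:
\begin{itemize}
\item for $k=0$, $W(h_{\lambda_0})=\lambda_0\cos x$;
\item for $k=1$, $\kappa_1=\cos x$;
\item for $2\le k\le N$, $\kappa_k=0$.
\end{itemize}
Truncating the series at order $N$ does not change these derivatives. By Faà di Bruno \eqref{eq_faadibruno}, the $k$-th derivative at $\lambda_0$ only involves $\varphi_1,\dots,\varphi_k$, and $\varphi_n$ for $n>N$ enter only at orders $>N$.

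It follows that $g_x$ has a zero of order at least $N+1$ at $\lambda_0$. So $g_x(\lambda) = (\lambda-\lambda_0)^{N+1} q_x(\lambda)$ with $q_x$ holomorphic on $B^\C_{2r_N}(\lambda_0)$.

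Now apply the maximum principle, as in a Schwarz lemma, on the circle $|\lambda-\lambda_0| = \rho$ with $r_N \le \rho < 2r_N$:
\[
|q_x(\lambda)| \le \frac{c'}{\rho^{N+1}} \le \frac{c'}{r_N^{N+1}} \quad \text{for } |\lambda-\lambda_0|\le r_N.
\]
Hence $|g_x(\lambda)| \le c' \left|\frac{\lambda-\lambda_0}{r_N}\right|^{N+1}$ on $B^\C_{r_N}(\lambda_0)$, uniformly in $x\in\R$. Equivalently, one can use Cauchy estimates on the Taylor coefficients of $g_x$, giving a geometric tail $\sum_{k>N} c'(|\lambda-\lambda_0|/2r_N)^k \le 2c'(|\lambda-\lambda_0|/2r_N)^{N+1}$.

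The main point to verify is the vanishing of the low-order derivatives for the truncated $h_{\lambda,N}$. The inductive scheme in Section \ref{sec_approximation} states this only as the identity displayed at the start of Section \ref{sec_error_estimates}. I would make explicit that the $k$-th $\lambda$-derivative of $W(h_{\lambda,N})$ at $\lambda_0$ coincides with $\kappa_k$ for all $k\le N$, which follows from the Faà di Bruno structure. Uniformity of the constant in $x$ and $N$ comes directly from the universal bound in Proposition \ref{prop_analytic}.
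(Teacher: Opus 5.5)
Your proof is correct and follows the paper's argument: both use the vanishing of the first $N+1$ Taylor coefficients at $\lambda_0$ to divide $W(h_{\lambda,N})(x)-\lambda\cos x$ by $(\lambda-\lambda_0)^{N+1}$, and then apply the maximum modulus principle with the universal bound from Proposition~\ref{prop_analytic}. Working on $B^\C_{2r_N}(\lambda_0)$ is a harmless refinement that keeps the maximum principle strictly inside the domain of analyticity, and your factor $|\cos x|\le 1$ for real $x$ is the correct form of the paper's $\max|\cos(z)|$ term.
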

\begin{proof}
	Let $r_N>0$ be the value obtained in Proposition \ref{prop_analytic}.
	Fix $x \in \R$ and consider the function $f(\lambda) = W(h_{\lambda,N})(x) - \lambda \cos(x)$ which is analytic in $B^\C_{r_N}({\lambda_0})$.

	Since by construction we have $f^{(n)}(\lambda_0)=0$ for $n = 0,1, \ldots, N$, the function $f(z)/(z-\lambda_0)^{N+1}$ is also analytic.
	By the maximum modulus principle,
	\[|f(z)/(z-\lambda_0)^{N+1}| \leq \max_{\partial B^\C_{r_N}({\lambda_0})} \left|\frac{f(z)}{{r_N}^{N+1}}\right|,\]
	which yields
	\[|f(z)| \leq \left(c+(\lambda_0+r_N) \max_{z \in \partial B^\C_{r_N}(\lambda_0)} |\cos(z)| \right) \left|\frac{z-\lambda_0}{r_N}\right|^{N+1},\]
where $c>0$ is the universal constant given in Proposition \ref{prop_analytic}.
\end{proof}

\begin{proof}[Proof of Theorem \ref{thm_approximate}]
We denote $K_{\lambda,N}$ the body obtained as in Section \ref{sec_param}, using the function $h = h_{\lambda,N}$.
As well, we denote by $L_{\lambda, N}, A_{K_{\lambda, N}}$, as defined in Sections \ref{sec_intro} and \ref{sec_param} .
From \eqref{eq_darea_x} and \eqref{eq_def_W},
\begin{equation}
	\label{eq_bound_WD}
	W(h_{\lambda,N})(x) - \lambda \cos(x) = \partial_x[ (A_{K_{\lambda,N}}(x) - \lambda) \sin(x)],
\end{equation}
for every $x \in [0,\pi]$.
Let
\begin{equation}
	\label{eq_def_epsilon}
	\varepsilon = \|W(h_{\lambda,N}) - \lambda \cos\|_\infty .
\end{equation}
For $x \in [0,\pi/2]$,
\begin{align}
	|( \lambda - A_{K_{\lambda,N}}(x)) \sin(x)|
&= \left| \int_0^x \partial_x[ (\lambda - A_{K_{\lambda,N}}(x)) \sin(x) ] dx \right| \\
&\leq \pi \varepsilon x.
\end{align}

Then we obtain
\begin{equation}
	\label{eq_bound_lambdaA}
	|\lambda - A_{K_{\lambda,N}}(x)| \leq \frac{x \pi\varepsilon}{\sin(x)} \leq \frac{\pi^2\varepsilon}{2}.
\end{equation}
Furthermore, by the symmetry of $L_{\lambda,N} \subseteq \R^2$ with respect to the vertical axis, we get \eqref{eq_bound_lambdaA} for every $x \in (0,\pi)$.

Let $r_N > 0$ be sufficiently small so that for $\lambda \in (\lambda_0 - r_N, \lambda_0 + r_N)$, the set $K_{\lambda,N}$ is a smooth convex body.
By Proposition \ref{prop_bound_error} we obtain an even smaller $r_N>0$ such that for $\lambda \in (\lambda_0-r_N, \lambda_0 + r_N)$,
\begin{equation}
	\label{eq_bound_D}
	|\partial_x[ (A_{K_{\lambda,N}}(x) -\lambda) \sin(x)| \leq c\left|\frac{\lambda - \lambda_0}{r_N} \right|^{N+1}.
\end{equation}

Putting together \eqref{eq_bound_WD}, \eqref{eq_def_epsilon}, \eqref{eq_bound_lambdaA} and \eqref{eq_bound_D} we obtain
\[ |A_{K_{\lambda,N}}(x) -\lambda| \leq c \left|\frac{\lambda - \lambda_0}{r_N}\right|^{N+1},\]
as we wanted to show.
\end{proof}

\section{Concluding Remarks}
\subsection{Convergence of the Power Series} As mentioned already, the convergence of \eqref{eq_formalpowerseries} in some neighborhood of $\lambda_0$ would finally show an actual counterexample to the Barker--Larman problem in dimension $4$ in the class of analytic, symmetric convex bodies of revolution.
		The key issue is, of course, to obtain a uniform bound $0 < r_0 < r_N$ for all $N$.
		We choose to leave this problem open because it depends on exceedingly complicated computations and estimates on the $N$-th differential of $W$.
		A key issue is that in order to bound $\|W^{N}(h_0)[v_1, \ldots, v_N]\|$, for instance, using the Cauchy formula for
		\[(t_1, \ldots, t_N) \mapsto W(h_0 + t_1 v_1 + \cdots + t_N v_N),\]
		one needs to control the $C^1$ norm of the $v_i$, so that $h_0 + t_1 v_1 + \cdots + t_N v_N$ is invertible.

		\subsection{Choice of the Bifurcation point} The bifurcation point $\lambda_0$ was selected in this paper because the first differential of $W$ is the easiest to describe, in terms of kernel and range.
		More concretely, we compute $W'$ at the function $x \mapsto x - \frac \pi 3$.
		But the same iterative scheme could be applied to other values $\lambda_* = \omega_3 \tan(\alpha)^3$, provided that the matrices $M_n$ in Section \ref{sec_operatorW} remain invertible.
		This would amount to study the first and second differential of $W$ at the function $x \mapsto x-\alpha$.

		Clearly we cannot apply our method unless $W'(x \mapsto x - \alpha)$ has a non-trivial kernel, which is the case of every $\alpha = \frac {a \pi}b$ where $a,b \in \N$ and $b$ is odd.
		We conjecture here that these counterexamples exist around $\lambda_q>0$ where $q = \frac 1\pi \arctan((\lambda_q/\omega_3)^{1/3}) \in \mathbb Q$ can be written with an odd denominator.
		This would complement the fact that the positive results obtained in \cite{haddad2026convex}, concern only values of $\lambda$ for which $\frac 1\pi \arctan((\lambda/\omega_3)^{1/3})$ is difficult to approximate by rational numbers.
		\subsection{Dead ends} This problem can be approached unsuccessfully in many different and creative ways.

		One of them is to try to apply the Crandall--Rabinowitz bifurcation theorem to the non-linear operator that computes the areas of the section of a convex body $K$, as a function of the radial function of $K$. Here one is readily confronted with the small divisors problem, which prevents a diagonal operator $T(e_i) = P^d_n(t_0) e_i$ to be invertible. Here $T: \ell_2 \to \ell_2$, $e_i$ are the canonical vectors, $P^d_n$ are the Legendre-Gegenbauer polynomials, and $t_0$ is the candidate to bifurcation point, which is a zero of $P^d_3$. Curiously enough, $d=2,4$ appear to be the only dimensions where the zeros of $P^d_n$ can have intersection with $P^d_m$ for $n \neq m$. This suggests that the choice of the dimension $d=4$, which is fixed in this paper and in \cite{haddad2026convex}, might be connected to deeper problems in the theory of Legendre polynomials.

		A second dead end is to try to construct the boundary of $L$ as an invariant set of the operator $T_K$ defined in \cite{haddad2026convex}. Numerical experiments suggest that this invariant sets, even though they contain convex curves, are strange-looking fractals, and certainly discard any computer assisted solution, like validated numerics.

\bibliographystyle{abbrv}
\bibliography{../references}

\end{document}